\tikzstyle{line} = [draw, thick]
\definecolor{myblue}{RGB}{80,80,160}
\definecolor{mygreen}{RGB}{80,160,80}
\definecolor{myred}{RGB}{255,0,0}
\definecolor{mybrown}{RGB}{139,69,19}
\DeclarePairedDelimiter\ceil{\lceil}{\rceil}
\DeclarePairedDelimiter\floor{\lfloor}{\rfloor}
\newtheorem{theorem}             {Theorem}
\newtheorem{conjecture}	[theorem] {Conjecture}
\newtheorem{proposition}[theorem] {Proposition}
\newtheorem{remark}[theorem] {Remark}
\newtheorem{observation}[theorem] {Observation}
\newtheorem{problem}[theorem] {Problem}   
\newtheorem{corollary}	[theorem] {Corollary}
\newtheorem{claim}{Claim}
\newtheoremstyle{case}{}{}{}{}{\bfseries}{:}{ }{}
\theoremstyle{case}
\newtheorem{case}{Case}
\numberwithin{subcase}{case}
\newcommand{\authormark}[1]{\textsuperscript{\,#1}}
\newcommand{\showmark}[1]{%
  \hspace*{-1em}\makebox[1em][r]{\authormark{#1}\,}%
  \ignorespaces}
\begin{document}

\title{Towards a Dual Version of Woodall’s Conjecture for Partial 3-Trees}

\author{Juan Gutiérrez} 
\date{}

\maketitle

\begin{center}
\footnotesize

\bigskip
\showmark{1}
Departamento de Ciencia de la Computación\\ 
Universidad de Ingeniería y Tecnología (UTEC), Lima, Perú\\
E-mail: \texttt{jgutierreza@utec.edu.pe}

\end{center}

\begin{abstract}
	A dual version of a conjecture by Woodall asserts that, in a planar
digraph, the length of a shortest dicycle equals the maximum number of pairwise disjoint feedback arc sets. We verify this conjecture for the case where the underlying graph is a 3-tree or a partial 3-tree with girth $3$.
Additionally, we show that every 3-tree has a feedback arc set of size at most~$m/3-1$, where~$m$ is the number of arcs of the digraph, and this bound is tight. We further establish an upper bound on the size of a minimum feedback arc set in $k$-trees. Finally, we discuss some open problems and conjectures.
\end{abstract}

\section{Introduction}	\label{sec:introduction}

A \textit{graph} is a pair~$(V,E)$, where~$V$ is a set
of \textit{vertices} and~$E$ is a set of unordered pairs of distinct vertices, called \textit{edges}.
A \textit{directed graph}, or \textit{digraph},
is a pair~$(V,E)$, where~$V$ is a set
of \textit{nodes} and~$E$ is a set of ordered pairs of distinct
nodes, called \textit{arcs}.
A \emph{dicut} in a digraph~$G$ is a partition of~$V(G)$ into two subsets, so that each arc with ends in both subsets leaves the same set of the partition,
and a \textit{dijoin} is a subset of~$E(G)$ that intersects every dicut.

In 1978, Woodall conjectured that the size of a maximum number of pairwise disjoint dijoins is equal to the size of a minimum dicut \cite{Woodall78}.
Feofiloff and Younger, and Schrijver, independently proved that this
holds for source-sink connected digraphs \cite{Feofiloff87,Schrijver1982},
and Lee and Wakabayashi established it when the underlying graph is
series-parallel \cite{Lee2001}.
Recently, Cornuéjols et al.   \cite{Cornuejols2025} showed it when the underlying graph is chordal.

In this paper, we are interested in a dual version of Woodall's conjecture.
A directed cycle, or \textit{dicycle}, in a digraph is a directed walk in which all vertices are distinct except that the first and last vertices coincide.
A directed path, or \textit{dipath}, in a digraph is a directed walk in which all vertices are distinct.
Given a digraph, a
\textit{feedback arc set}
is a set of arcs that intersects every dicycle of the digraph. A \textit{packing} is a set of pairwise disjoint feedback arc sets.
We denote by~$\nu(G)$ the size of a maximum packing in a digraph~$G$ and by~$g(G)$ the \emph{girth} of $G$, that is, the length of a shortest dicycle in~$G$.
As every dicycle must intersect any feedback arc set, we have~$\nu(G) \leq g(G)$.

It is straightforward to observe that for planar digraphs, every dicut corresponds to a dicycle,
and every dijoin corresponds to a feedback arc set in the dual digraph.
Hence, by duality, Conjecture~\ref{conj:Woodall-planar} is equivalent to asserting that,
for every planar digraph, its girth
equals the cardinality of a maximum packing.

\begin{conjecture}[{\cite{Woodall78,Lee2001}}]\label{conj:Woodall-planar}
  For every planar digraph~$G$ with at least one dicycle,~$\nu(G)=g(G)$.
\end{conjecture}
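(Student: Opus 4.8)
The plan is to split the equality into the two inequalities $\nu(G)\le g(G)$ and $\nu(G)\ge g(G)$, of which only the second has real content. For the first, let $C$ be a shortest dicycle, so that $C$ has exactly $g(G)$ arcs. Every transversal must contain at least one arc of $C$, and the members of a packing are pairwise arc-disjoint; hence a packing has at most $g(G)$ members, giving $\nu(G)\le g(G)$ for every planar digraph with a dicycle.

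Before attacking $\nu(G)\ge g(G)$ I would make two reductions valid for all planar digraphs. First, it suffices to treat strongly connected $G$: every dicycle lies inside a single strong component, so $g(G)=\min_i g(G_i)$ over the components $G_i$ that contain a dicycle, and taking the componentwise union of packings shows $\nu(G)\ge\min_i\nu(G_i)$; thus once the result holds for each strongly connected $G_i$ we obtain $\nu(G)\ge\min_i g(G_i)=g(G)$. Second, by the duality recalled above, the statement $\nu(G)\ge g(G)$ for all planar $G$ is equivalent to Woodall's original min-dicut $=$ max-dijoin-packing, Conjecture~\ref{conj:Woodall-1}, restricted to planar digraphs, since dicuts correspond to dicycles and dijoins to transversals in the dual. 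This lets me pass freely between the two forms and exploit whichever has more accessible structure.

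For the hard inequality I would argue by structural induction along a decomposition of the underlying planar graph. The known cases point the way: the series-parallel case~\cite{Lee2001} is essentially the treewidth-$2$ instance, and the source-sink-connected theorem~\cite{Feofiloff87,Schrijver1982} settles a structurally restricted family. The template is to locate a small separator, recurse on the pieces, and then glue the resulting collections of $g(G)$ transversals across the separator, rerouting only a bounded number of arcs through it so as to restore both arc-disjointness and the property of meeting every dicycle. For each fixed treewidth this degenerates into a finite surgery on the separator, and executing it for width $3$ is precisely the contribution that advances the planar $3$-tree case past series-parallel.

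The decisive obstacle is that planar graphs have \emph{unbounded} treewidth (witness the $n\times n$ grids), so no treewidth-bounded induction reaches every planar digraph, and a transversal packing is a global object: a gluing across one separator can destroy arc-disjointness arbitrarily far away. I therefore expect the genuinely new ingredient to be a planar argument using the embedding directly together with the Lucchesi--Younger min-max on the dual, which already furnishes the matching bound on the covering side; the crux is then to convert a fractional transversal packing — which one expects to equal $g(G)$ by linear-programming duality — into an integral packing of the same size, so that planarity is what forces the integrality gap to vanish. That integrality step, rather than any single gluing lemma, is where I expect the real difficulty to lie; it is exactly the gap that keeps the full conjecture open, while the bounded-width cases (of which planar $3$-trees are the next after series-parallel) remain provable one at a time.
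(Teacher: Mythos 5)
You have not produced a proof, and indeed no proof should be expected: the statement you were given is Conjecture~\ref{conj:Woodall-planar} itself, which is open, and the paper does not prove it either --- it proves only the special case where the underlying graph is a planar 3-tree (Theorem~\ref{th:pl3tree}), by induction using separating ditriangles, a route that stays entirely inside that graph class. What you do establish is correct but easy: the inequality $\nu(G)\le g(G)$ (every transversal meets a shortest dicycle, and the transversals in a packing are disjoint), the reduction to strongly connected digraphs, and the equivalence with Conjecture~\ref{conj:Woodall-1} via planar duality. All of that is sound but standard, and none of it touches the hard inequality $\nu(G)\ge g(G)$.

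The genuine gap is the entire content of that hard direction. Your third paragraph proposes a treewidth-style separator induction, and your fourth paragraph correctly explains why it cannot work in general (planar graphs have unbounded treewidth, and gluing packings across a separator is non-local); but the replacement you offer --- convert a fractional transversal packing of value $g(G)$ into an integral one --- is not an argument, it is a reformulation of the conjecture. Since the clutter of dicuts is ideal by Lucchesi--Younger, its blocker is ideal as well, so the fractional packing optimum already equals $g(G)$ for every planar digraph; asserting that ``planarity forces the integrality gap to vanish'' is therefore exactly the statement $\nu(G)=g(G)$ restated in polyhedral language, with no mechanism supplied for proving it. In short: you prove the trivial inequality, correctly diagnose where the difficulty lies, and then stop precisely at the point where the conjecture begins. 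If your goal was the result the paper actually obtains, you would instead restrict to planar 3-trees and exploit their structure as the paper does: a shortest dicycle there is a ditriangle (chordality), and one inducts on $n$ by splitting at a separating ditriangle $abc$, aligning the three transversals of the two sides along the arcs $ab$, $bc$, $ca$, with a separate hands-on construction (using the degree-3 vertices $V_3$ and Proposition~\ref{prop:G-V3}) when no ditriangle separates.
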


This dual version of the conjecture has received less attention than the original one.
Lee and Wakabayashi \cite{Lee2001} showed that Conjecture~\ref{conj:Woodall-planar} is true for digraphs whose underlying graph is series-parallel
and Lee and Williams showed that it
is true when the underlying graph is planar and has no~$K_5-e$ minor \cite{Lee2006}.
We are not aware of any other result regarding Conjecture~\ref{conj:Woodall-planar}.

In this paper, we prove the next result.

\begin{restatable}{theorem}{maintheorem}
\label{th:pl3tree}
If~$G$ is a digraph, with at least one dicycle, whose underlying graph is a 3-tree, then~$\nu(G)= g(G)$.
\end{restatable}

Note that Theorem~\ref{th:pl3tree} is valid even if the digraph is not planar. However, the hypothesis of planarity of Conjecture~\ref{conj:Woodall-planar} cannot be removed in the general case.
In fact, Donadelli and Kohayakawa \cite{Donadelli2002} exhibited a tournament~$G$ on 15 vertices\footnote{They attribute this construction to Thomassen, via a private communication between Younger and Wakabayashi.} for which~$\beta(G) > m(G)/3$, where $\beta(G)$ is the size of a minimum feedback arc set in $G$. Hence, since the girth of any tournament with at least one dicycle equals 3, $\nu(G) \neq g(G)$.
Moreover, they showed an infinite family of digraphs showing that the planarity hypothesis cannot be dropped from Conjecture~\ref{conj:Woodall-planar}. 

As in the previous paragraph, an easy corollary to observe is that if Conjecture~\ref{conj:Woodall-planar}  
holds, then~$\beta(G) \leq m(G)/g(G)$.
Hence, it is interesting to study a weaker version of Conjecture~\ref{conj:Woodall-planar} by removing the hypothesis of planarity, since it can yield upper bounds on the size of a minimum feedback arc set.

In this direction, very recently, Gutin et al. \cite{Gutin2025}
obtained several results considering the maximum degree~$\Delta(G)$ of a graph.
For instance, they 
showed that 
$\nu(G) \geq k$ if~$\Delta(G) \leq 3$ and~$g(G) \geq k$ for~$k\in \{3,4,5\}$.
This implies that Conjecture~\ref{conj:Woodall-planar}
holds for digraphs whose underlying graph is subcubic and whose directed girth is between 3 and 5.
It is interesting to observe that this property holds even when~$G$ is not planar.
They also showed that~$\nu(G) < g(G)$ for some digraphs with~$g(G) \in \{4,6,10\}$.
Moreover, an open conjecture they gave is whether~$\nu(G)=3$ 
when~$g(G)\geq 3$ and~$\Delta(G)=5$.

It is straightforward to note that if~$G$ is a digraph with at least one dicycle whose underlying graph is a 3-tree then~$g(G)=3$ (Proposition~\ref{prop:dycicleimpliesditriangle}). Hence, a direct corollary of Theorem~\ref{th:pl3tree} is that~$\beta(G) \leq m(G)/3$ when~$G$ is a digraph whose underlying graph is a 3-tree. Moreover, we can easily extend this result for partial 3-trees.

\begin{corollary}\label{cor:betam3partial3trees}
Let~$G$ be a digraph whose underlying graph is a partial 3-tree.
If~$g(G) \geq 3$
    then~$\beta(G) \leq m(G)/3$.
    Moreover, if $g(G)=3$
    then~$\nu(G)=g(G)$.
\end{corollary}
\begin{proof}

We direct any pair of vertices not in~$E(G)$ 
arbitrarily, creating a digraph~$G'$ whose underlying graph is a 3-tree.
As $G$ is not acyclic, $G'$ is not acyclic.
As we have not created antiparallel arcs in $G'$, we have $3\leq g(G')$.
So $g(G')=3$ by Proposition~\ref{prop:dycicleimpliesditriangle}.
By Theorem~\ref{th:pl3tree},~$\nu(G')=g(G')$.
As any dicycle in~$G$ exists also in~$G'$, any packing of~$G'$ is also a packing of~$G$, so we have~$3=g(G')=\nu(G') \leq \nu(G)$.
Hence, as $\nu(G) \geq 3$, $G$ has a feedback arc set of size at most $m(G)/3$.
Now, if $g(G)=3$, then $3 \leq \nu(G) \leq g(G)=3$,
so $\nu(G) =g(G)$.
\end{proof}

A natural question is whether the upper bound in the first part of Corollary~\ref{cor:betam3partial3trees} is asymptotically tight for partial 3-trees. We show that, in fact, this is the case for 3-trees.

\begin{restatable}{theorem}{threetreetightTheorem}\label{thm:3treetight}
   Let~$G$ be a digraph whose underlying graph is a 3-tree.
    If~$g(G) \geq 3$ then~$\beta(G) \leq \frac{m(G)-3}{3}$.
    Moreover, this bound is tight.
\end{restatable}

Furthermore, we also prove the next result for~$k$-trees.

\begin{restatable}{theorem}{ktreetightTheorem}\label{thm:betaktree}
Let~$G$ be a digraph whose underlying graph is a $k$-tree.
    If~$g(G) \geq 3$ then~~$\beta(G) \leq \frac{\floor {k/2}}{k}m(G) + \frac{k-1}{4}(\ceil{k/2}-\floor {k/2}) - c(k+1)^{3/2}$, where~$c= \frac{1}{8 \sqrt{\pi}}$.
\end{restatable}

\section{Preliminaries}	

For a given graph $G$, we denote by $N_G(u)$ the set of neighbors of $u$ in $G$. A clique is a set of pairwise adjacent vertices,
and a triangle is a clique of size $3$.
In this paper, we are interested in studying Conjecture~\ref{conj:Woodall-planar} when the underlying graph has bounded treewidth.
The concept of treewidth is highly relevant in the proof of the graph minor theorem by Robertson and Seymour \cite{Robertson90}
as well as in the field of combinatorial optimization \cite{Bodlaender88,Arnborg89}.


A graph~$G$ is a~$k$-tree if and only if $n(G) \geq k+1$ and
there exists an ordering 
$v_1,v_2, \ldots, v_n$ 
of the vertices of~$G$ such that
$N_{G_i}(v_i)$ is a clique of size~$\min\{k,i-1\}$,
where~$G_i$ is the graph induced by~$v_1,v_2,\ldots,v_i$.
We call such an ordering a
\emph{$k$-elimination ordering} for $G$.
A \emph{partial~$k$-tree} is any graph that is a subgraph of a~$k$-tree.
It is known that partial~$k$-trees are exactly the graphs with treewidth at most~$k$ \cite{Bodlaender98}.

Partial 1-trees are precisely the forests. Partial 2-trees are the series-parallel graphs \cite{Brandstadt99}.
Lee and Wakabayashi proved that Conjecture~\ref{conj:Woodall-planar} holds for digraphs whose underlying graph is a partial 2-tree.
Hence, it seems natural to answer Conjecture~\ref{conj:Woodall-planar} for
planar digraphs whose underlying graph is a partial 3-tree.

\begin{problem}\label{prob:woodalltw3}
Show Conjecture~\ref{conj:Woodall-planar} for
 digraphs whose underlying graph is a partial 3-tree.
\end{problem}

Given two graphs~$H$ and~$G$, we say that 
$H$ is a \textit{minor} of~$G$ if~$H$ can be
obtained from~$G$ by deleting 
edges, vertices and by contracting edges. 
Lee and Williams showed that Conjecture \ref{conj:Woodall-planar} holds
when the underlying graph has no~$K_5-e$ minor \cite{Lee2001}.
The next observation shows that~$K_5-e$ minor-free graphs are partial 3-trees,
and thus the result given by Lee and Williams partially solves Problem~\ref{prob:woodalltw3}.
We will use the characterization of partial 3-trees given by Arnborg \cite{Arnborg1990}.


\begin{theorem}[{\cite[Theorem 1.2]{Arnborg1990}}]\label{th:arnborg}
The set of minimal forbidden minors for partial 3-trees
consists of four graphs:~$K_5, M_6, M_8$, and~$M_{10}$ (Figure~\ref{fig:forbidden-minors}).
\end{theorem}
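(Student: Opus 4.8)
The plan is to prove the two inclusions that any obstruction-set characterization requires: \emph{soundness}, that each of $K_5, M_6, M_8, M_{10}$ is a minor-minimal graph of treewidth at least $4$, and \emph{completeness}, that every minor-minimal graph of treewidth at least $4$ is one of these four (equivalently, every graph that is not a partial $3$-tree contains one of them as a minor). Throughout I would use the equivalence between partial $3$-trees and graphs of treewidth at most $3$ recorded above, and measure treewidth directly through tree decompositions satisfying (T1)--(T3).

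For soundness I would first certify $\tw(G)\ge 4$ for each $G\in\{K_5,M_6,M_8,M_{10}\}$ by exhibiting a \emph{bramble} every hitting set of which has at least $5$ vertices, and invoking the duality between brambles and treewidth; for these small graphs this is a short finite check (for $K_5$ the single vertices already form such a bramble). Minor-minimality is then verified by hand: for each edge $e$ of each $G$ I would display a width-$3$ tree decomposition of both the deletion $G-e$ and the contraction $G/e$. Since the minor operations are edge deletion, edge contraction, and deletion of isolated vertices, showing that every single-edge operation drops the treewidth to $3$ establishes that every proper minor of $G$ is a partial $3$-tree.

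The substantial direction is completeness. Let $H$ be a minor-minimal graph with $\tw(H)\ge 4$; the goal is to force $H\in\{K_5,M_6,M_8,M_{10}\}$. First I would reduce the connectivity: $H$ is connected, since otherwise a component of largest treewidth would be a smaller obstruction, and in fact $H$ is $3$-connected. The tool here is that treewidth interacts cleanly with small separations: if $(A,B)$ is a separation of $H$ of order at most $2$, then $\tw(H)$ equals the maximum of the treewidths of the two sides with the separator turned into a clique, so one side would already have treewidth at least $4$ and contain a proper minor-minimal obstruction, contradicting the minimality of $H$. Having reduced to $3$-connected graphs, the crux is to bound $|V(H)|\le 10$ and then to carry out a structural case analysis over $3$-connected graphs of order at most $10$, discarding those that contain a smaller member of the list and isolating exactly the four named graphs.

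The main obstacle is precisely this last step. The Graph Minor Theorem guarantees abstractly that the obstruction set is finite, but it supplies neither the explicit size bound nor the identity of its members; the real content is the constructive determination that nothing of treewidth $4$ hides beyond $M_{10}$. I would attack the size bound by arguing that a hypothetical large $3$-connected obstruction admits a vertex whose deletion, or the contraction of one of its incident edges, preserves both $3$-connectivity and $\tw\ge 4$ while staying inside the class, contradicting minimality; the delicate point is that $3$-connectivity is fragile under single reductions, so this requires careful local analysis. An alternative, and arguably cleaner, route is the reduction-rule approach: give a confluent set of local rewriting rules that reduce any partial $3$-tree to the empty graph, and characterize the minimal graphs on which \emph{no} rule applies, so that the four obstructions emerge as exactly the irreducible configurations. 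Either way, the heart of the proof is the finite but intricate analysis showing that the list of four is complete.
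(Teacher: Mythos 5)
A point of framing first: the paper contains no proof of this statement at all --- it is imported verbatim from Arnborg, Proskurowski and Corneil \cite{Arnborg1990} and used as a black box, so your proposal has to be judged as a stand-alone proof. As such it has a genuine gap: the completeness direction is never actually proved. The soundness half is fine as a plan --- exhibiting a bramble of order $5$ in each of $K_5$, $M_6$, $M_8$, $M_{10}$, and a width-$3$ tree decomposition of every single-edge deletion and contraction, is a legitimate finite check. Your reduction of completeness to $3$-connected obstructions is also essentially sound (for a separation of order at most $2$ with connected sides, each side together with the edge on the separator is a minor of $H$, so the clique-separator lemma controls the treewidth). But from there, the two steps that constitute the theorem --- the bound $|V(H)|\le 10$ and the identification of exactly four obstructions --- appear only as intentions. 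The sentence ``a hypothetical large $3$-connected obstruction admits a vertex whose deletion, or the contraction of one of its incident edges, preserves $\tw\ge 4$'' is precisely the assertion that needs proof, and no argument for it is offered; likewise ``a structural case analysis over $3$-connected graphs of order at most $10$'' cannot be waved at, since the number of such graphs is in the thousands and discarding them requires further structural ideas, not inspection. You flag this yourself as the main obstacle, which is honest, but an acknowledged missing step is still a missing step: it is exactly where the entire content of the Arnborg--Proskurowski--Corneil theorem lives, and their published proof is a long, intricate analysis (close in spirit to your ``confluent reduction rules'' alternative), not a verification one can leave implicit.

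So the verdict is: what you have is a reasonable research programme, with a correct division into soundness and completeness and a correct connectivity reduction, but not a proof; the decisive combinatorial work is named rather than done. For the purposes of the present paper this is also the pragmatic point --- the result is rightly cited rather than reproved, and any self-contained proof would be a paper-length undertaking of its own.
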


	\begin{figure}[H]
		\centering
		\includegraphics[scale=1]{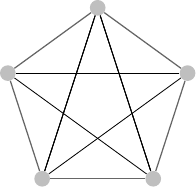}
		\includegraphics[scale=1]{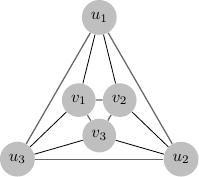}	
			\includegraphics[scale=1]{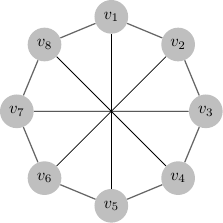}
		\includegraphics[scale=1]{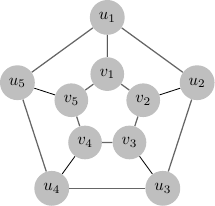}
		\caption{The minimal forbidden minors for partial 3-trees. From left to right:~$K_5, M_6, M_8$, and~$M_{10}$.}
		\label{fig:forbidden-minors}		
	\end{figure}

\begin{observation}\label{prop:K5-ehastw3}
Every~$K_5-e$ minor-free graph is a partial 3-tree.
\end{observation}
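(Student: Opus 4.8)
The plan is to reduce the statement to Arnborg's forbidden-minor characterization (Theorem~\ref{th:arnborg}) together with the stated equivalence between partial $3$-trees and graphs of treewidth at most three. Concretely, it suffices to prove the contrapositive: a graph $G$ that is $K_5-e$-minor-free should avoid all four obstructions $K_5$, $M_6$, $M_8$, $M_{10}$, and hence be a partial $3$-tree. Since the minor relation is transitive, the whole reduction rests on a single finite fact: \emph{each} of $K_5$, $M_6$, $M_8$, $M_{10}$ contains $K_5-e$ as a minor. Granting this, if $G$ had any one of these as a minor it would also have $K_5-e$ as a minor, contrary to hypothesis; so $G$ avoids all four, and Theorem~\ref{th:arnborg} then gives that $G$ is a partial $3$-tree, whence $\tw(G)\le 3$.

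The trivial case is $K_5$ itself: $K_5-e$ is a subgraph of $K_5$, so a graph with a $K_5$ minor certainly has a $K_5-e$ minor, and a $K_5-e$-minor-free graph is in particular $K_5$-minor-free. The heart of the argument is to exhibit, in each of $M_6$, $M_8$, and $M_{10}$, a model of $K_5-e$, i.e.\ five pairwise disjoint connected branch sets realizing all nine required adjacencies. For the octahedron $M_6=K_{2,2,2}$ this is immediate: contracting any single edge merges two vertices from distinct parts into a vertex adjacent to all four others, and those four remaining vertices induce $K_4$ minus the single edge joining the one antipodal pair among them, so the contraction is exactly $K_5-e$. For $M_8$ and $M_{10}$ I would read the graphs off Figure~\ref{fig:forbidden-minors} and display explicit branch sets. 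Since both are $3$-regular, the two degree-$3$ vertices of $K_5-e$ can be taken as singletons whose three neighbours lie in three distinct branch sets, while the three degree-$4$ vertices force branch sets of size at least two; a short case analysis then produces a suitable partition (two non-adjacent vertices as the singletons, together with three connected pieces covering the rest, arranged so that the three pieces are pairwise joined and each singleton meets all three).

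Putting the pieces together, any $K_5-e$-minor-free graph contains none of $K_5$, $M_6$, $M_8$, $M_{10}$ as a minor, so by Theorem~\ref{th:arnborg} it is a partial $3$-tree and therefore satisfies $\tw(G)\le 3$. I expect the only real labour to be the finite verification of the $K_5-e$ minors in $M_8$ and $M_{10}$, and the main obstacle there is bookkeeping—choosing branch sets that simultaneously hit the prescribed degrees and leave exactly the one required non-adjacency—rather than any conceptual difficulty. There is moreover no danger that these checks could fail: were some obstruction $M_i$ to be $K_5-e$-minor-free, it would itself be a treewidth-$4$ counterexample to the observation, so the truth of the statement guarantees the models exist.
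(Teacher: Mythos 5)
Your overall skeleton is exactly the paper's: argue the contrapositive via Arnborg's characterization (Theorem~\ref{th:arnborg}), and reduce everything to the finite claim that each of $K_5$, $M_6$, $M_8$, $M_{10}$ contains $K_5-e$ as a minor. Your treatment of $K_5$ (subgraph) and of $M_6$ (contracting any edge of the octahedron merges two vertices from distinct antipodal pairs into a dominating vertex, leaving $K_4$ minus the edge of the surviving antipodal pair) is complete and correct, and matches the paper's single-contraction argument for $M_6$.

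However, there is a genuine gap: for $M_8$ and $M_{10}$ you never actually exhibit the $K_5-e$ models; you only describe the shape a model would have (two singleton branch sets for the degree-$3$ vertices, three larger connected branch sets for the degree-$4$ vertices) and defer the ``short case analysis.'' Worse, your closing justification that the checks cannot fail --- ``were some obstruction $M_i$ to be $K_5-e$-minor-free, it would itself be a treewidth-$4$ counterexample to the observation, so the truth of the statement guarantees the models exist'' --- is circular: you are invoking the truth of the very statement being proved to discharge a step of its proof. Since the verifications for $M_8$ and $M_{10}$ are, as you yourself say, the heart of the argument, the proof is incomplete as written. The paper closes precisely this gap with explicit contractions read off its labelled figure: in $M_8$ contract the edges $v_1v_2$, $v_3v_4$, $v_6v_7$, and in $M_{10}$ contract $v_1v_2$, $v_4v_5$, $u_1u_5$, $u_2u_3$, $u_3u_4$; each yields $K_5-e$. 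Supplying such explicit contractions (or explicit branch sets) for these two graphs is what your proposal still needs.
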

\begin{proof}

Let~$G$ be a~$K_5-e$ minor-free graph.
Suppose by contradiction that~$G$ is not a partial 3-tree.
By Theorem~\ref{th:arnborg},~$G$ has one of
$K_5, M_6, M_8$ or~$M_{10}$ as a minor.
If~$G$ has a~$K_5$ minor, then it is clear that it also has a~$K_5-e$ minor, and we obtain a contradiction.
Suppose now that~$G$ has~$M_6$ as a minor, and suppose
the vertices of~$M_6$ are 
as in Figure~\ref{fig:forbidden-minors}.
Then, by contracting edge~$u_1v_1$ we obtain a~$K_5-e$, a contradiction.
Suppose now that~$G$ has~$M_8$ as a minor, and suppose
the vertices of~$M_8$ are
as in Figure~\ref{fig:forbidden-minors}.
Then, by contracting edges~$v_1v_2, v_3v_4$ and~$v_6v_7$ we obtain a~$K_5-e$, a contradiction.
Finally, let us suppose that~$G$ has~$M_{10}$ as a minor, and suppose
the vertices of~$M_{10}$ are
as in Figure~\ref{fig:forbidden-minors}.
Then, by contracting edges~$v_1v_2, v_4v_5,u_1u_5,u_2u_3$ and~$u_3u_4$ we obtain a~$K_5-e$, again a contradiction.
\end{proof}


In this paper, we progress towards Problem~\ref{prob:woodalltw3},
tackling the case
when the underlying graph is a 3-tree.
Note that planar 3-trees are not included in the class
of~$K_5-e$ minor-free graphs.
Indeed, the~$K_5-e$ is itself a planar 3-tree, and by adding new vertices, we can obtain an infinite family of planar 3-trees with~$K_5-e$ as a minor.
Moreover, we can remove the hypothesis of planarity,
solving Problem~\ref{prob:woodalltw3} for 3-trees.


\begin{restatable}{theorem}{maintheorem}
\label{th:3tree}
If~$G$ is a digraph with at least one dicycle and whose underlying graph is a 3-tree, then~$\nu(G)=g(G)$.
\end{restatable}

We finish this section with the next known propositions (see \cite{Bodlaender98}).
We let~$V_3$ be the degree~3 vertices of a 3-tree~$G$.
A \emph{separator} in a connected graph is a subset of vertices whose removal disconnects the graph.

 \begin{proposition} \label{prop:G-V3}
  If~$G$ has at least 5 vertices, then~$V_3$ is an independent set, and~$G-V_3$ is a 3-tree or a triangle. 
 \end{proposition}

 \begin{proposition} \label{prop:seppluscomps}
 If~$G$ is a~$3$-tree with a separator~$S$ of size~$3$, then~$C \cup S$ induces a~$3$-tree for any component~$C$ of~$G-S$.
 \end{proposition}  

  \begin{proposition} \label{prop:trianglesinG-V3areseparator}
If~$G$ is a 3-tree on at least 5 vertices, then every triangle in~$G-V_3$ is a separator in~$G$.
 \end{proposition}

A \emph{ditriangle} in a digraph is a dicycle of size 3. 
A graph is called \emph{chordal} if every induced cycle
has size 3. It is known that $k$-trees are chordal \cite{Bodlaender98}.
\begin{proposition}\label{prop:dycicleimpliesditriangle}
Let~$G$ be a digraph whose underlying graph is a 3-tree. If
$G$ is not acyclic and
$g(G) \geq 3$, then
$g(G) = 3$.
\end{proposition}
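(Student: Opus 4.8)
The plan is to exploit the chordal structure of 3-trees. Since a 3-tree is chordal, any cycle of length at least $4$ in the underlying graph has a chord, and more strongly, the faces of a maximal planar 3-tree are triangles. The strategy is to take a dicycle $C$ in $G$ and, if $|C|\ge 4$, use a chord to split it into two strictly shorter closed walks, at least one of which is again a dicycle. Iterating this descent must terminate at a ditriangle.

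**First I would** observe that the underlying graph $U$ of $G$ is chordal (every 3-tree is chordal, since each added vertex forms a simplicial vertex whose neighborhood is a triangle/clique). Take a shortest dicycle $C$ in $G$; if $|C|=3$ we are done, so assume $|C|\ge 4$. Let $\{a,b\}$ be two vertices of $C$ that are nonconsecutive on $C$ but adjacent in $U$ — such a chord exists because the induced cycle on $V(C)$ (or any cycle of length $\ge 4$) has a chord in a chordal graph. Now the edge $ab$ in $U$ is oriented one way in $G$, say as the arc $(a,b)$ or $(b,a)$.

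**The key step** is the splitting argument. Write $C = C_{ab}\cdot C_{ba}$ using the $ab$-parts notation, where $C_{ab}$ is the dipath of $C$ from $a$ to $b$ and $C_{ba}$ the dipath from $b$ back to $a$ (one of these is the directed path following $C$'s orientation; note $C$ is a dicycle so traversing $C$ gives a consistent direction). The chord $ab$ is an arc; if it is oriented $(b,a)$, then $C_{ab}\cdot(b,a)$ (the dipath from $a$ to $b$ along $C$, closed by the arc back to $a$) is a dicycle strictly shorter than $C$. If instead the chord is oriented $(a,b)$, then $C_{ba}\cdot(a,b)$ is a dicycle strictly shorter than $C$. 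Either way we produce a dicycle of length strictly less than $|C|$, contradicting minimality of $C$. Hence $|C|=3$, i.e.\ $C$ is a ditriangle.

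**The main obstacle** will be making the chord argument clean at the level of orientations rather than just the underlying graph: a chord of the underlying cycle always splits the \emph{undirected} cycle, but I must ensure one of the two resulting closed walks is genuinely a \emph{directed} cycle. The observation that saves it is that $C$ is already a dicycle, so each of its two $ab$-parts is a directed path (one from $a$ to $b$, the other from $b$ to $a$); whichever direction the chord arc takes, it composes consistently with exactly one of these parts to form a dicycle. I would also need to confirm the chord edge exists in $G$'s underlying graph and lies within $V(C)$ — this is exactly chordality applied to the subgraph induced by $V(C)$, which contains the undirected cycle underlying $C$. Once this orientation bookkeeping is handled, the strict length decrease and termination are immediate.
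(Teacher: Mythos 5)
Your proposal is correct and follows essentially the same argument as the paper: take a shortest dicycle, use chordality of the 3-tree to find a chord, and close one of the two $ab$-parts with the chord arc to obtain a strictly shorter dicycle, contradicting minimality. In fact your handling of the chord's orientation is slightly more careful than the paper's wording, which states the two candidate dicycles with the arc directions transposed.
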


For the rest of the paper, we set $n(G):=|V(G)|, m(G):=|E(G)|$ for any digraph $G$, and we use $m$ and $n$ if $G$ is clear from the context.
 
\section{Proof Theorem \ref{th:3tree}} \label{sec:pl3trees}





For the case when~$g(G)=2$, see Remark~\ref{remark:dec2acyclic}. Hence, by Proposition~\ref{prop:dycicleimpliesditriangle}, we may assume that~$g(G)=3$.
We prove, by induction on~$n$, that~$G$ has a packing of size 3.
If~$n \leq 4$, then~$G$ is one of the digraphs in Figure~\ref{fig:tournaments4vertices}, and the claim follows by inspection.
Suppose now that~$n>4$. We divide the rest of the proof in two cases.

\label{fig:tournaments4vertices}
\begin{figure}[h]
\centering
\begin{tikzpicture}[scale=1.3, every node/.style={circle, fill=black, inner sep=1.5pt}, ->, >=stealth]

\def\spacing{3.0}  
\def\topY{0}
\def\triangleA{(0,1)}
\def\triangleB{(-1,-0.5)}
\def\triangleC{(1,-0.5)}
\def\center{(0,0)}

\begin{scope}[shift={(0*\spacing, \topY)}]
    \node (a) at \triangleA {};
    \node (b) at \triangleB {};
    \node (c) at \triangleC {};
    \node (d) at \center    {};

    \draw (a) -- (b);
    \draw (a) -- (c);
    \draw (a) -- (d);
    \draw (b) -- (c);
    \draw (b) -- (d);
    \draw (c) -- (d);
\end{scope}

\begin{scope}[shift={(1*\spacing, \topY)}]
    \node (a) at \triangleA {};
    \node (b) at \triangleB {};
    \node (c) at \triangleC {};
    \node (d) at \center    {};

    \draw (a) -- (b);
    \draw (a) -- (c);
    \draw (a) -- (d);
    \draw (b) -- (c);
    \draw (c) -- (d);
    \draw (d) -- (b);
\end{scope}

\begin{scope}[shift={(2*\spacing, \topY)}]
    \node (a) at \triangleA {};
    \node (b) at \triangleB {};
    \node (c) at \triangleC {};
    \node (d) at \center    {};

    \draw (a) -- (b);
    \draw (b) -- (c);
    \draw (c) -- (d);
    \draw (d) -- (a);
    \draw (a) -- (c);
    \draw (b) -- (d);
\end{scope}

\begin{scope}[shift={(3*\spacing, \topY)}]
    \node (a) at \triangleA {};
    \node (b) at \triangleB {};
    \node (c) at \triangleC {};
    \node (d) at \center    {};

    \draw (a) -- (b);
    \draw (a) -- (c);
    \draw (a) -- (d);
    \draw (b) -- (c);
    \draw (c) -- (d);
    \draw (d) -- (b);
\end{scope}

\end{tikzpicture}

\caption{All non-isomorphic 3-trees with 4 vertices.}
\end{figure}

 \setcounter{case}{0}

 \begin{case}~There exists a ditriangle in~$G$ whose vertices form a separator in the underlying graph, say~$abc$.

 Let~$C_1,C_2, \ldots ,C_p$ be the components of~$G-abc$. For any~$i$,
 let~$G_i = C_i \cup abc$.
 By Proposition~\ref{prop:seppluscomps}, every~$G_i$ is a 3-tree.
 Hence, by induction hypothesis, every~$G_i$ has a packing~$\{T^i_1,T^i_2,T^i_3\}$.
Without loss of generality, we may assume that
$ab \in T^i_1$, $bc \in T^i_2$, and $ca \in T^i_3$ for every such~$i$. We will show that~$\{T_1, T_2,T_3\}=\{\bigcup_i T^i_1, \bigcup_i T^i_2, \bigcup_i T^i_3\}$ is a packing of~$G$.
Let~$C$ be a dicycle in~$G$. 
If~$V(C) \subseteq V(G_i)$ for some~$i$, then~$C \cap T_j \neq \emptyset$ for any~$j \in \{1,2,3\}$ and we are done.
So, from now on, let us assume that $V(C) \not  \subseteq V(G_i)$ for any $i$.
This implies that~${|V(C) \cap \{a,b,c\}| \geq 2}$.

\begin{claim}\label{claim:internallydijsointpaths}
Let~$P$ be a dipath whose ends~$x$ and~$y$ lie in~$abc$.
If~$P$ is internally disjoint from~$abc$, then
$P \cap T_j \neq \emptyset$ for any~$T_j$
that intersects the subpath of~$abc$ from~$x$ to~$y$.
\end{claim}
\begin{proof}
    Let~$Q$ be the subpath of~$abc$ from~$x$ to~$y$.
    Let~$\overline{Q}$ be the subpath of~$abc$ from~$y$ to~$x$.
    Note that~$P \cdot \overline{Q}$ is a dicycle in some~$G_i$.
    So by induction hypothesis, 
   ~$(P \cdot \overline{Q})\cap T_k \neq \emptyset$ for any~$k \in \{1,2,3\}$.
    Let~$j\in \{1,2,3\}$ such that~$T_j \cap Q \neq \emptyset$.
   Since~$T_j \cap \overline{Q} = \emptyset$ and~$(P \cdot \overline{Q})\cap T_j \neq \emptyset$, it follows that~$P \cap T_j \neq \emptyset$.
\end{proof}

Let~$C= P_1 \cdot P_2 \cdots P_{|C|}$, where any~$P_i$ is internally disjoint from~$abc$.
Let~$v_1, v_2, \ldots, v_{|C|},v_{|C|+1}=v_1$
be an ordering of vertices of~$V(C) \cap V(abc)$ such that~$P_i$ has ends~$v_i$ and~$v_{i+1}$.
For any~$i$, let~$Q_i$ be the subpath of~$abc$ from~$v_{i}$ to~$v_{i+1}$. 
By Claim~\ref{claim:internallydijsointpaths},~$P_i \cap T_j \neq \emptyset$ 
if~$Q_i \cap T_j \neq \emptyset$.
As, for any~$j \in \{1,2,3\}$, there exists an~$i$ such that~$Q_i \cap T_j \neq \emptyset$, there exists also an~$i$ such that~$P_i \cap T_j \neq \emptyset$ and the proof follows.
   \end{case}
  \begin{case}~~There exists no ditriangle in~$G$ whose vertices form a separator in the underlying graph.

  Let~$V_3$ be the set of vertices of degree 3 in~$G$, and let~$G'=G-V_3$. In this case, we set~$T_1=E(G')$
and define~$T_2$ and~$T_3$ according to the next rule.
Let~$u \in V_3$ and suppose that the neighbors of~$u$ in the underlying 3-tree are~$a,b$ and~$c$.
Note that~$\{a,b,c\}$ does not induce a ditriangle in~$G$.
Indeed, otherwise Proposition~\ref{prop:trianglesinG-V3areseparator} would imply that~$abc$ is a separator in~$G$.
Thus, we may assume that~$ab,bc,ac \in E(G)$.
If~$xyu$ is a ditriangle, with~$x,y \in \{a,b,c\}$, then we add~$yu$ to~$T_2$ and~$ux$ to~$T_3$.
(Figure~\ref{fig:case2}).

 	\begin{figure}[H]
		\centering
		\includegraphics[scale=0.65]{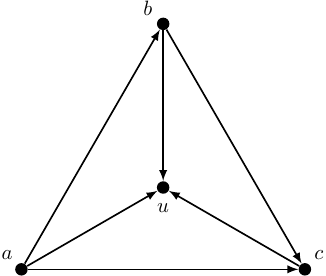}
		\includegraphics[scale=0.65]{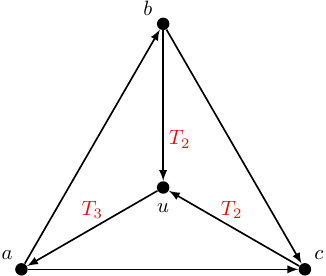}
		\includegraphics[scale=0.65]{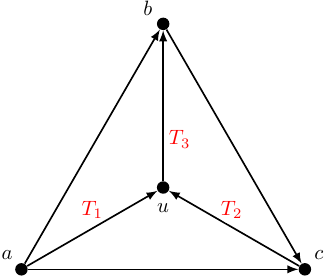}
		\includegraphics[scale=0.65]{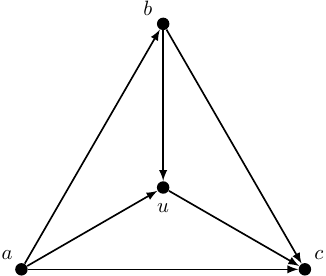}
		\includegraphics[scale=0.65]{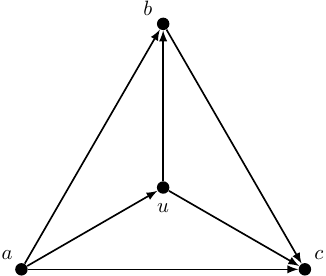}
		\includegraphics[scale=0.65]{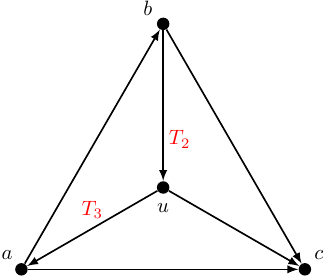}
		\includegraphics[scale=0.65]{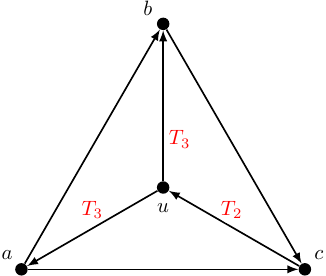}
		\includegraphics[scale=0.65]{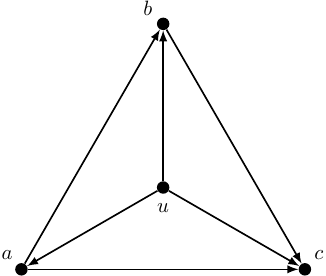}
		\caption{The construction of the packing for the proof of Case 2 in Theorem~\ref{th:pl3tree}. In each of the cases, we label in each arc to which
		feedback arc set this arc will be added. No labelled arcs are not included in anyu feedback arc set of the packing.}
		\label{fig:case2}		
	\end{figure}

We will show that~$\{T_1,T_2,T_3\}$ is a packing of~$G$.
Note that, by Proposition~\ref{prop:G-V3}, the underlying graph of~$G'$
is a 3-tree or a triangle.
Suppose for a moment that~$G'$ has a dicycle. Then, as~$g(G') \geq g(G) \geq 3$, it follows that~$g(G')=3$ by Proposition~\ref{prop:dycicleimpliesditriangle}.
So~$G'$ has a ditriangle, which is a separator in~$G$ by Proposition~\ref{prop:trianglesinG-V3areseparator}.
Thus~$G'$ is acyclic, so every dicycle in~$G$ contains a vertex in~$V_3$. 
Let~$C$ be a dicycle in~$G$.

For each~$u \in V_3 \cap V(C)$, let~$e_u$ 
denote the unique arc of~$C$ connecting the two neighbors of~$u$ within~$C$.
Let~$D=(C-V_3) \cup \{e_u:u \in V_3\}.$
As~$V(D) \subseteq V(G')$,~$D$ is not a dicycle.
Hence, there exists~$u\in V_3 \cap V(C)$ such that~$e_u \cdot u$ is a ditriangle.
Thus, by the definition of~$T_2$ and~$T_3$,
~$C \cap T_2 \neq \emptyset$ and 
$C \cap T_3 \neq \emptyset$.
If~$E(C) \cap E(G') \neq \emptyset$, then we are done,
as every arc of~$G'$ is in~$T_1$.
Thus, let us assume that~$E(C) \subseteq E(G \setminus G')$
and suppose by contradiction that 
$C \cap T_1 = \emptyset$.
This implies, by the definition of~$T_2$ and~$T_3$, that 
$C$ consists of an even sequence of arcs that alternates between arcs in~$T_2$ and~$T_3$.
But in this case,~$D$ is a dicycle in~$G'$, a contradiction.
\end{case}
This completes the proof of Theorem~\ref{th:pl3tree}.

\section{Feedback arc sets in~$k$-trees}

Recall that~$\beta(G)$ is the size of a minimum feedback arc set in a digraph $G$.
An easy corollary of Theorem~\ref{th:pl3tree} is that~$\beta(G) \leq m(G)/3$ for any digraph whose underlying graph is a 3-tree~$G$ with~$g(G) \geq 3$.
However, this result is not tight, as any 3-tree~$G$ on~$4$ vertices and~$g(G) \geq 3$ satisfies~$\beta(G) \leq 1$ (Figure \ref{fig:tournaments4vertices}).
In the next theorem, we slightly improve this upper bound and show that it is tight.

\threetreetightTheorem*

\begin{proof}
    By induction on~$n(G)$.
    Let $v_1,v_2,\ldots,v_n$ be a 3-elimination order of $G$. If~$n(G)=4$, then~$G$ has exactly one dicycle and~$\beta(G)=1$ (See Figure \ref{fig:tournaments4vertices}).
    If~$n(G)>4$, by induction hypothesis,~$G'=G-v_n$ has a feedback arc set~$X'$ of size at most~$\frac{m(G')-3}{3}=\frac{m(G)-6}{3}$. 
     Let~$X=X' \cup A$,
     where~$A=\{v_jv_n \in E(G): j<n\}$ if~$|\{v_jv_n \in E(G): j<n\}| \leq 
     |\{v_nv_j \in E(G): j<n\}|$ and~$A=\{v_nv_j \in E(G): j<n\}$ otherwise.
   Clearly,~$X$ is a feedback arc set of~$G$ and~$|A| \leq 1$. Hence
  ~$$
    |X| \leq |X'|+ |A|   \leq 
   \frac{m(G)-6}{3} + 1 = \frac{m(G)-3}{3}.
  ~$$
   For the second part of the theorem, consider a planar 3-tree constructed as follows.
   Start with a ditriangle $G_0$ and a collection of ditriangles~$P_0=\emptyset$.
   Each time, select a ditriangle~$abc$ in~$G_{i-1}$ such that~$|ab \cap E(P)|=2$.
   Add a vertex~$v_i$ and the arcs~$bv_i,v_ia$ and~$v_ic$, creating a new graph~$G_i$.
   Also, set~$P_i=P_{i-1} \cup \{abv_i\}$.
   For this construction to be applicable, it suffices to show 
   that for every~$i\geq 1$, there exists a triangle in~$G_{i-1}$ that intersects the arcs of~$P_{i-1}$ at most twice.
   Proceeding by induction on~$i$, if~$i=1$ then~$P_0=\emptyset$;
       otherwise, by the construction, as~$cv_i \neq E(P_{i})$, we have that 
      ~$|acv_i \cap E(P_i)|=2$.

   Now, note that~$P_n$ has size~$\frac{m(G)-3}{3}$.
   As every pair of triangles in~$P$
   is arc-disjoint, we must have~$\beta(G) \geq |P| \geq \frac{m(G)-3}{3}$.
\end{proof}

Following the first part of the proof of Theorem~\ref{thm:3treetight}, we
can show a similar result for~$k$-trees.
A \emph{tournament} is a digraph in which, for every pair of distinct vertices \(u\) and \(v\), exactly one of the arcs $uv$ or $vu$ is present.
Since every~$k$-tree on~$k+1$ vertices is a tournament, we can use the following result.

\label{prop:betatournament}
\begin{proposition}[{\cite{Poljak1988}}]
    If~$G$ is a tournament
    then~$\beta(G) \leq \frac{m(G)}{2}-c(n(G))^{3/2}$, where~{$c= \frac{1}{8 \sqrt{\pi}}$}.
\end{proposition}

\ktreetightTheorem*

\begin{proof}
By induction on~$n$.
Let $v_1,v_2,\ldots,v_n$ be a $k$-elimination order of $G$.

If~$n = k+1$ then~$G$ is a tournament, so, by Proposition \ref{prop:betatournament}, 
\begin{eqnarray*}
    \beta(G) &\leq& \frac{1}{2}m-cn^{3/2} \\ &=&
    \frac{\floor {k/2}}{k}m + \frac{m}{2k}(\ceil{k/2}-\floor {k/2}) - c(k+1)^{3/2}\\
    &=&
    \frac{\floor {k/2}}{k}m + \frac{k-1}{4}(\ceil{k/2}-\floor {k/2}) - c(k+1)^{3/2}.
\end{eqnarray*}
Now, if $n>k+1$ then we let $G'=G-v_n$. By selecting a set $A$
of at most $\floor {k/2}$ arcs as in the proof of Theorem~\ref{thm:3treetight}, we have
\begin{eqnarray*}
\beta(G) &\leq& \beta(G')+ \floor {k/2} \\
   &\leq&  
   \frac{\floor {k/2}}{k}(m-k) + \frac{k-1}{4}(\ceil{k/2}-\floor {k/2}) - c(k+1)^{3/2}+  \floor {k/2}\\
    &\leq&  
   \frac{\floor {k/2}}{k}m + \frac{k-1}{4}(\ceil{k/2}-\floor {k/2}) - c(k+1)^{3/2}.
\end{eqnarray*}
This completes the proof of the theorem.
\end{proof}


    A trivial upper bound for $\beta(G)$ is $m(G)/2$
(see Remark \ref{remark:dec2acyclic}).
Hence, Theorem \ref{thm:betaktree} becomes more interesting for odd values of $k$.

\begin{corollary}
If $G$ is a $k$-tree with $k$ odd, then $\beta(G) \leq \frac{k-1}{2k}m(G) + \frac{k-1}{4} - c(k+1)^{3/2}$, where~$c= \frac{1}{8 \sqrt{\pi}}$.
\end{corollary}

We observe that, in contrast to Theorem~\ref{thm:3treetight}, we have not shown that the result of Theorem~\ref{thm:betaktree} is asymptotically tight.
We believe that this is not the case.
Indeed, as~$g(G) \geq 3$,
any collection of pairwise arc-disjoint directed cycles has size at most~$m(G)/3$,
so, using a technique similar to the one used in Theorem~\ref{thm:3treetight}, we can only hope to obtain a lower bound of~$m(G)/3$ for $\beta(G)$.

\section{Final remarks and open problems}

In this paper, we showed Woodall's conjecture (Conjecture~\ref{conj:Woodall-planar}) for digraphs whose underlying graph is a 3-tree. This result is a step towards proving Woodall’s conjecture for digraphs whose underlying graph is a partial $k$-tree. In particular, the conjecture for partial 3-trees remains open.
In this direction, we show that if the girth of a partial 3-tree is~3, then the conjecture holds.

Woodall's conjecture is difficult to tackle in its current form, as, if the conjecture is true, then a direct corollary is an upper bound for the size of a minimum feedback arc set. And, to our knowledge, nontrivial upper bounds for this parameter are only known for digraphs with small maximum degree.
In this direction, we progress by proving a nontrivial upper bound for~$k$-trees, and a tight upper bound for 3-trees. However, we do not know if the bound for $k$-trees can be improved.

\begin{problem}
    Improve the result of Theorem~\ref{thm:betaktree} or show that this result is also tight.
\end{problem}

As we mentioned earlier, we are not aware of any nontrivial upper bound for partial $k$-trees.
In this direction, Knauer et al.~\cite{Knauer2022} study upper bounds for the size of a minimum feedback vertex set for~$k$-degenerate graphs, which are a generalization of partial~$k$-trees.
We think some of the techniques presented here can be extended to study upper bounds for minimum feedback arc sets in~$k$-degenerate graphs.

\begin{problem}
    Find a nontrivial upper bound for~$\beta(G)$ when~$G$ is~$k$-degenerate, chordal or a partial~$k$-tree.
\end{problem}

The tightness of Theorem~\ref{thm:3treetight} relies implicitly on the concept of \emph{packing of dicycles}, which is a pairwise set of arc-disjoint dicycles. We use the fact that the size of any maximum packing of dicycles in a digraph~$G$, denoted by~$\nu_C(G)$, is at most $\beta(G)$.
The dual version of the celebrated Theorem of Luchessi and Younger \cite{Lucchesi1978} states that equality applies if $G$ is planar.The counterexample of Thomassen mentioned in the introduction showd that equality does not hold for nonplanar digraphs. Indeed, in this case,
$\beta(G) > m(G)/3$, but $nu_C(G) \leq m(G)/3$.
However,
as in the dual version of Woodall’s conjecture, we believe it is interesting to study a nonplanar version of the dual of the Lucchesi–Younger theorem.

\begin{problem}
Show that~$\nu(G)=\beta(G)$ for every~$k$-degenerate graph for small values of $k$.
\end{problem}

Moreover, we can relax the conjecture as follows.

\begin{problem}
Show that there exists a constant $c$ such that~$\beta(G) \leq c \cdot \nu_C(G)$ for every digraph~$G$.
\end{problem}

Returning to Woodall's conjecture, we can define, for every~$k \in \mathbb{N}$ with~$k\geq 2$, the following families of conjectures
\footnote{see also \url{http://www.openproblemgarden.org/op/woodalls_conjecture}}.

\begin{conjecture}[Conjecture~$WC_{=}\mathbf{(k)}$]\label{conj:WC=}
Let~$G$ be a planar digraph. If~$g(G)=k$ then~$\nu(G)=k$.
\end{conjecture}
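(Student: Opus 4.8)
The plan is to split the equality $\nu(G)=k$ into the bound $\nu(G)\le k$, which is elementary, and the bound $\nu(G)\ge k$, which carries the entire difficulty and is, as worded, equivalent to the full planar case of Woodall's conjecture. For the upper bound, fix a shortest dicycle $C$, so that $C$ has exactly $g(G)=k$ arcs; every transversal contains at least one arc of $C$, and since the transversals of a packing are pairwise disjoint there can be at most $|E(C)|=k$ of them. This argument uses no planarity.

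For the lower bound I would first reduce the conjecture to a purely integrality statement. Working in the planar dual, where dicycles become dicuts and transversals become dijoins, the clutter of dicuts is Mengerian by the Lucchesi--Younger theorem (minimum dijoin equals maximum number of disjoint dicuts), hence ideal; by Lehman's theorem its blocker, the clutter of transversals, is ideal as well. For an ideal clutter the fractional and integral covering numbers coincide, and by linear-programming duality the fractional packing number equals the fractional covering number; since the covering number of the transversal clutter is the minimum dicut $g(G)=k$, we obtain $\nu^{*}(G)=k$. Consequently $WC_{=}(k)$ is exactly the assertion that the transversal clutter is itself Mengerian, i.e. that this common fractional optimum is attained by an integral packing on planar instances.

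To establish integrality I would induct on $k$. The base cases $k\le 2$ are direct: $g(G)=2$ is handled as in Remark \ref{remark:dec2acyclic}, and the remaining small case reduces to a consistent two-colouring of the arcs of the digons. For the inductive step the goal is to peel off a single transversal $T_{1}$ that uses exactly one arc of every shortest dicycle and whose removal drops the minimum dicut of the (still planar) dual from $k$ to $k-1$; applying the inductive hypothesis to this smaller digraph would yield $k-1$ pairwise disjoint transversals, and one would then argue that these lift back to transversals of $G$ that, together with $T_{1}$, form a packing of size $k$. Planar duality together with the tightness in Lucchesi--Younger is the natural source of a candidate $T_{1}$: a minimum dijoin meets every dicut, and one hopes it can be chosen to meet every minimum dicut in a single arc.

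The hard part---indeed the reason $WC_{=}(k)$ is open---will be precisely this inductive step. Mengerianness is not preserved under taking clutter minors, so idealness alone does not force the integral packing; and the naive attempt to delete a whole transversal fails, since a transversal meets every dicycle and its deletion renders $G$ acyclic rather than of girth $k-1$. One must instead exhibit a transversal whose deletion lowers $g$ by exactly one while the remaining dicycles still admit $k-1$ disjoint transversals, and no general mechanism for this is known. The separator-based induction used for Theorem \ref{th:pl3tree} does not extend, because general planar graphs have unbounded treewidth and lack the ditriangle-separator recursion; a successful proof would likely require either a total dual integrality argument for the transversal-packing system on planar digraphs, or a genuinely global structural decomposition finer than the one available for planar $3$-trees.
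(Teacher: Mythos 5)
You cannot have proved this statement, and to your credit you do not claim to: $WC_{=}(k)$ is stated in the paper as an open conjecture, with no proof given --- the paper only settles the special cases $k=2$ (Remark \ref{remark:dec2acyclic}, for all digraphs) and $k=3$ when the underlying graph is a planar 3-tree or partial 3-tree (Theorem \ref{th:pl3tree} and its corollary). What you actually establish is correct but is not the conjecture: the upper bound $\nu(G)\le k$ (sound, and indeed planarity-free --- disjoint transversals each consume a distinct arc of a fixed shortest dicycle), and the fractional equality $\nu^{*}(G)=k$ via Lucchesi--Younger, Lehman's blocker theorem, and LP duality (also sound, modulo the harmless identification of primal transversals with dual dijoins). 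The genuine gap is the inductive step you outline and then concede: producing a transversal $T_1$ whose removal yields a planar instance with parameter $k-1$ is not a lemma one can hope to isolate --- it is equivalent to the integral packing assertion, i.e., to the conjecture itself. Your own observation makes the structural problem vivid: deleting a transversal kills \emph{every} dicycle, so the induction cannot even be formulated by arc deletion; one would have to contract in the dual or work with weighted minors, and no mechanism producing such a $T_1$ is known. So the proposal is a correct reduction of $WC_{=}(k)$ to its known hard core, not a proof, and there is no proof in the paper to compare it against.

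One technical correction to your framing: the claim that ``Mengerianness is not preserved under taking clutter minors'' is false. The Mengerian property \emph{is} minor-closed, since deleting an element corresponds to giving it infinite weight and contracting corresponds to weight zero in the weighted min-max relation. The true obstruction, which is what your argument needs, is that idealness does not imply the packing (Mengerian) property --- the standard example is the clutter $Q_6$, which is ideal but does not pack --- so Lehman's theorem delivers $\nu^{*}(G)=k$ and nothing more. Your base-case remark about ``a consistent two-colouring of the arcs of the digons'' is also vaguer than needed: $k=2$ follows immediately from Remark \ref{remark:dec2acyclic}, since each of the two acyclic arc classes is a transversal of the other's dicycles.
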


\begin{conjecture}[Conjecture~$WC_{\geq}\mathbf{(k)}$]\label{conj:WC>=}
Let~$G$ be a planar digraph. If~$g(G)\geq k$ then~$\nu(G) \geq k$.
\end{conjecture}

Note that if we solve~$WC_{=}\mathbf{(k)}$ for any~$k$, then
Woodall's conjecture holds.
Also, note that a solution for~$WC_{\geq}\mathbf{(k)}$ 
implies a solution for~$WC_{=}\mathbf{(k)}$.
In fact, if that is the case, then~$k \leq \nu(G) \leq g(G)=k$.
Conjecture~$WC_{\geq}\mathbf{(2)}$ is equivalent to the problem
of decomposing the arcs of a digraph into two acyclic digraphs.
There is a folklore solution to this problem: consider an arbitrary sequence of the nodes and divide the set of arcs of the digraph into two sets, depending on whether it consists of an increasing or decreasing pair according to the sequence.

\begin{remark}\label{remark:dec2acyclic}
The set of arcs of any digraph~$G$ can be decomposed into two
acyclic digraphs.
\end{remark}

Extending this result, Wood proved that any digraph can be decomposed into an arbitrary number of acyclic digraphs. Moreover, any vertex in such digraph has a small outdegree proportional to its original outdegree \cite{Wood2004}.
However, the families of conjectures given here are more difficult to prove, as they ask us to decompose
the arcs of a digraph into~$k$ acyclic digraphs, but with the additional condition that the union of any~$k-1$ of these digraphs is also acyclic.

No result is currently known for Conjecture~$WC_{\geq}\mathbf{(3)}$.
As we mention before, Conjecture~$WC_{\geq}\mathbf{(2)}$,
and thus Conjecture~$WC_{=}\mathbf{(2)}$, are already settled,
and we answered Conjecture~$WC_{=}\mathbf{(3)}$ for partial 3-trees in Corollary~\ref{cor:betam3partial3trees}.
No more results are known on this line.
We believe that this is evidence that Woodall’s conjecture is difficult in its current form and that a project to prove the full conjecture should be structured around small incremental steps.

\begin{problem}
Study Conjectures~\ref{conj:WC=} and~\ref{conj:WC>=} for particular graph classes and for particular values of~$k$.
\end{problem}


\begin{thebibliography}{99}

\bibitem{Woodall78}
D. R. Woodall.
\newblock Menger and König systems.
\newblock In \emph{Theory and Applications of Graphs}, pages 620--635, 1978.



\bibitem{Lee2001}
O. Lee and Y. Wakabayashi.
\newblock Note on a min-max conjecture of Woodall.
\newblock \emph{Journal of Graph Theory}, 38(1):36--41, 2001.



\bibitem{Lee2006}
O. Lee and A. Williams.
\newblock Packing dicycle covers in planar graphs with no $K_5-e$ minor.
\newblock In \emph{Lecture Notes in Computer Science}, volume 3887, pages 677--688, 2006.



\bibitem{Arnborg1990}
S. Arnborg, A. Proskurowski, and D. G. Corneil.
\newblock Forbidden minor characterization of partial 3-trees.
\newblock \emph{Discrete Mathematics}, 80(1):1--19, 1990.



\bibitem{Feofiloff87}
P. Feofiloff and D. Younger.
\newblock Directed cut transversal packing for source-sink connected graphs.
\newblock \emph{Combinatorica}, 7:255--263, 1987.


\bibitem{Robertson90}
N. Robertson and P. D. Seymour.
\newblock Graph minors. {IV}. {T}ree-width and well-quasi-ordering.
\newblock {\em Journal of Combinatorial Theory, Series B}, 48(2):227--254, 1990.

\bibitem{Bodlaender88}
H. L. Bodlaender.
\newblock Dynamic programming on graphs with bounded treewidth.
\newblock In \emph{Automata, Languages and Programming}, volume 310, pages 105--118, 1988.



\bibitem{Arnborg89}
S. Arnborg and A. Proskurowski.
\newblock Linear time algorithms for NP-hard problems restricted to partial $k$-trees.
\newblock \emph{Discrete Applied Mathematics}, 23(1):11--24, 1989.


\bibitem{Brandstadt99}
A. Brandstädt, V. B. Le, and J. P. Spinrad.
\newblock \emph{Graph Classes: A Survey}.
\newblock SIAM, 1999.


\bibitem{Bodlaender98}
H. L. Bodlaender.
\newblock A partial $k$-arboretum of graphs with bounded treewidth.
\newblock \emph{Theoretical Computer Science}, 209(1):1--45, 1998.



\bibitem{Wood2004}
D. R. Wood.
\newblock Bounded degree acyclic decompositions of digraphs.
\newblock {\em Journal of Combinatorial Theory, Series B}, 90(2):309--313, 2004.


\bibitem{Schrijver1982}
A. Schrijver.
\newblock Min-max relations for directed graphs.
\newblock In \emph{Bonn Workshop on Combinatorial Optimization}, volume 66, pages 261--280, 1982.


\bibitem{Cornuejols2025}
G. Cornuéjols, D. Liu, and R. Ravi.
\newblock Packing dijoins in chordal digraphs.
\newblock \emph{arXiv preprint} arXiv:2501.10918, 2025.
\newblock \url{https://arxiv.org/abs/2501.10918}.

\bibitem{Donadelli2002}
J. Donadelli  and Y. Kohayakawa.
\newblock A density result for random sparse oriented graphs and its relation to a conjecture of Woodall.
\newblock \emph{Electron. J. Combin.}, 9(1):R45, 2002.
\newblock \url{https://doi.org/10.37236/1661}.

\bibitem{Gutin2025}
G. Gutin, M. A. Nielsen, A. Yeo, and Y. Zhou.
\newblock Feedback arc sets and feedback arc set decompositions in weighted and unweighted oriented graphs.
\newblock \emph{arXiv preprint} arXiv:2501.06935, 2025.
\newblock \url{https://arxiv.org/abs/2501.06935}.


\bibitem{Poljak1988}
S. Poljak, V. Rödl, and J. Spencer.
\newblock Tournament ranking with expected profit in polynomial time.
\newblock \emph{SIAM J. Discrete Math.}, 1(3):113--122, 1988.


\bibitem{Knauer2022}
K.~Knauer, T.-M.~La, and A.~Valicov,
\newblock Feedback vertex sets in (directed) graphs of bounded degeneracy or treewidth,
\newblock \emph{Electron. J. Combin.}, 29(4):P4.16, 2022.

\bibitem{Lucchesi1978}
C.~L.~Lucchesi and D.~H.~Younger,
\newblock A minimax theorem for directed graphs,
\newblock \emph{J. London Math. Soc. (2)}, 17(3):369--374, 1978.


\end{thebibliography}
\end{document}